\documentclass{article}

\usepackage{arxiv}

\usepackage[utf8]{inputenc} 
\usepackage[T1]{fontenc}    
\usepackage{hyperref}       
\usepackage{url}            
\usepackage{booktabs}       
\usepackage{amsfonts}       
\usepackage{nicefrac}       
\usepackage{microtype}      
\usepackage{lipsum}	
\usepackage{amsmath}
\usepackage{amsthm}
\newtheorem{definition}{Definition}
\newtheorem{remark}{Remark}
\newtheorem{theorem}{Theorem}
\newtheorem{lemma}{Lemma}

\title{Existence results for doubly nonlinear parabolic equations with two lower order terms and $L^1$-data}


\author{
 Abdelmoujib Benkirane\\
   Department of Mathematics\\
   Faculty of Sciences Fez\\
  University Sidi Mohamed Ben Abdellah \\
  P. O. Box 1796 Atlas Fez, Morocco.\\
  \texttt{abd.benkirane@gmail.com} \\
   \And
Youssef El Hadfi \\
Laboratory LIPIM\\
 National School of Applied Sciences\\
  Sultan M. Slimane University \\
  P. O. Box 77 Khouribga Morocco.\\
  \texttt{yelhadfi@gmail.com} \\
     \And
Mostafa El Moumni  \\
 Department of Mathematics\\
 Faculty of Sciences El Jadida\\
   University Chouaib Doukkali \\
 P. O. Box 20. 24000 El Jadida, Morocco.\\
  \texttt{mostafaelmoumni@gmail.com} \\
}



\begin{document}
\maketitle
\begin{abstract}
We investigate the existence of a renormalized solution for a class of nonlinear parabolic equations with two lower order terms and $L^1$-data.
\end{abstract}

\keywords{Nonlinear parabolic equations \and Renormalized solutions}

\section{Introduction}\quad We consider the following nonlinear parabolic problem
\begin{equation}\left\{ \begin{array}{lll}
\displaystyle{\partial b(x,u)\over \partial t}-{\rm div}(a(x,t,u,\nabla u))+ g(x,t,u,\nabla u)+ H(x,t,\nabla u) = f  &\mbox{ in } & Q_T, \\
b(x,u)(t=0)=b(x,u_0)  & \mbox{ in } &\Omega,\\
u  =  0 &\mbox{ on }&  \partial \Omega\times (0,T),
\end{array} \right. \label{D1.1}\end{equation}
where $\Omega$ is a bounded open subset of $\mathbb{R}^N$, $N \geq 1,\ T > 0,\ p > 1$ and $Q_T$ is the cylinder $\Omega\times(0, T)$.  The operator  $-{\rm div}(a(x,t,u,\nabla u))$ is a Leray-Lions operator which is coercive and grows like $|\nabla u|^{p-1}$ with respect to $\nabla u$, the function $b(x,u)$ is an unbounded on $u$, and $b(x,u_0) \in L^1(\Omega)$. The functions $g$ and $H$ are two Carath\'{e}odory functions with suitable assumptions see below. Finally the datum $f\in L^1(Q_T)$.
\par The problem \eqref{D1.1} is encountered in a variety of physical phenomena and applications. For instance, when $b(x,u)=u,\,a(x,t,u,\nabla u)=|\nabla u|^{p-2}\nabla u,\, g = f= 0, H(x,t,\nabla u)= \lambda |\nabla u|^q,$ where $q$ and $\lambda$ are positive parameters, the equation in problem \eqref{D1.1} can be viewed as the viscosity approximation of Hamilton -Jacobi type equation from stochastic control theory \cite{L2011}. In particular, when $b(x,u)=u,\,a(x,t,u,\nabla u)=\nabla u,\, g = f= 0, H(x,t,\nabla u)= \lambda |\nabla u|^2,$ where $\lambda$ is positive parameters, the equation in problem \eqref{D1.1} appears in the physical theory of growth and roughening of surfaces, where it is known as the Kardar-Parisi-Zhang equation \cite{KPZ1986}. We introduce the definition of the renormalized solutions for problem \eqref{D1.1} as follows. This notion was introduced by P.-L. Lions and Di Perna \cite{D.L} for the study of Boltzmann equation (see also P.-L. Lions \cite{Lions96} for a few applications to fluid mechanics models). This notion was then adapted to an elliptic version of \eqref{D1.1} by Boccardo et al \cite{B.G.D.M} when the right hand side is in $W^{-1,p'}(\Omega)$, by  Rakotoson \cite{Ra} when the right hand side being a in $L^1(\Omega)$, and by  Dal Maso,  Murat, Orsina and  Prignet \cite{D.M.O.P}  for the case of right hand side being a general measure data, see also \cite{Murat1,Murat2}.
\par For $b(x,u)=u$ and $H=0$, the existence of a weak solution to Problem \eqref{D1.1} (which belongs to $L^m(0, T;W^{1,m}_0(\Omega))$ with $p > 2 - \frac{1}{N+1}$ and $m < \frac{p(N+1)-N}{N+1}$ was proved in \cite{Boc1} (see also \cite{BoDaGaOr97}) where $g=0$, and in \cite{Po} where $g= 0$, and in \cite{Dall1,Po1,Po2}. When the function $g(x, t, u,\nabla u)\equiv g(u)$ is independent on the $(x, t,\nabla u)$ and $g$ is continuous, the existence of a renormalized solution to problem \eqref{D1.1} is proved in \cite{B.M.R}. Otherwise, recently in \cite{A.B.E.R2014} is proved the existence of a renormalized solution to problem \eqref{D1.1} where the variational case.
\par The scope of the present paper is to prove an existence result for renormalized solutions to a class of problems \eqref{D1.1} with two lower order terms and $L^1$-data. The difficulties connected to our problem \eqref{D1.1} are due to the presence of the two terms $g$ and $H$ which induce a lack of coercivity, noncontrolled growth of the function $b(x,s)$ with respect to $s$, the functions $a(x, t, u,\nabla u)$ do not belong to $(L^1_{loc}(Q_T))^N$ in general, and the data $b(x,u_0),\ f$ are only integrable.
\par The rest of this article is organized as follows. In Section 2 we make precise all the assumptions on $b,\ a,\ g,\ H,\ u_0$, we also give the concept of a renormalized solution for the problem \eqref{D1.1}.  In section 3 we establish the existence of our main results.
\section{Essential assumptions and different notions of solutions}
\quad Throughout the paper, we assume that the following assumptions hold true.  Let $\Omega$ is a bounded open set of $\mathbb{R}^N$ ($N\geq1$ ), $T>0$ is given and we set $Q_T=\Omega\times(0,T)$, and $$\mbox{$b:\Omega\times \mathbb{R}\to \mathbb{R}  \mbox{ is a
		Carath\'eodory function},$ }$$such that for every $x\in \Omega$, $b(x,.)$ is a strictly increasing
$C^1$-function with $b(x,0)=0$. Next, for any $k>0$, there exists
$\lambda_k>0$ and functions $A_k\in L^\infty(\Omega)$ and $B_k\in
L^p(\Omega)$ such that
\begin{equation}
\lambda_k\leq\frac{\partial b(x,s)}{\partial s}\leq A_k(x)
\mbox{ and } \Big|\nabla_x\Big(\frac{\partial b(x,s)}{\partial
	s}\Big)\Big|\leq B_k(x), \label{D2.2}
\end{equation}
for almost every $x\in \Omega$, for every $s$ such that $|s| \leq k$, we denote by $\nabla_x\big(\frac{\partial b(x,s)}{\partial s}\big)$ the
gradient of $\frac{\partial b(x,s)}{\partial s}$ defined in the
sense of distributions. \par Let $a:Q_T\times \mathbb{R}\times\mathbb{R}^N\to \mathbb{R}^N$ be a Carath\'eodory function, such that
\begin{equation}
|a(x, t,s, \xi)|\leq \beta [ k(x,t)  +|s|^{p-1}+  |\xi|^{p-1}]
,\label{D2.3}
\end{equation}
for a. e. $(x,t)\in Q_T$, all $(s,\xi)\in \mathbb{R}\times\mathbb{R}^N$, some positive function $k(x,t)\in L^{p'}(Q_T)$ and $\beta>0$.
\begin{gather}
[a(x,t,s,\xi) -a(x,t,s,\eta)](\xi-\eta) > 0   \mbox{ for all }
(\xi,\eta) \in \mathbb{R}^N\times\mathbb{R}^N,\ \mbox{with}\ \xi\neq\eta, \label{D2.4} \\
a(x, t,s, \xi) \xi \geq \alpha |\xi|^p,\mbox{ where $\alpha$ is a strictly positive constant.}
\label{D2.5}
\end{gather}
\par Furthermore, let $g(x,t,s,\xi): Q_T\times\mathbb{R}\times\mathbb{R}^N \to \mathbb{R}$ and $H(x,t,\xi): Q_T\times\mathbb{R}^N \to \mathbb{R}$ are two Carath\'eodory functions which satisfy, for almost every $(x,t)\in Q_T$ and for all $s\in \mathbb{R},\ \xi\in \mathbb{R}^N$, the following conditions
\begin{eqnarray}
&|g(x, t,s, \xi)|\leq L_1(|s|)(L_2(x,t)+|\xi|^p), \label{D2.6}\\
&g(x, t,s, \xi)s\geq0,\label{D2.66}
\end{eqnarray}
where $L_1:\mathbb{R}^+\to \mathbb{R}^+$ is a continuous increasing function, while $L_2(x,t)$ is positive and belongs to $L^1(Q_T)$.
\begin{equation}
\begin{array}{lll}
\exists\quad \delta>0, \quad\nu >0: &\mbox{for} \quad|s|\geq\delta,& |g(x,t,s,\xi)|\geq\nu|\xi|^{p},\end{array}
\label{Dg.3}\end{equation}
\begin{eqnarray}
|H(x,t,\xi)|\leq h(x,t)|\xi|^{p-1},\mbox{ where $h(x,t)$ is positive and belongs to $L^{p}(Q_T)$.}\label{D2.666}
\end{eqnarray}
We recall that, for $k>1$ and $s$ in $\mathbb{R}$, the truncation is defined as $T_k(s)= \max (-k, \min (k,s)).$
\par We shall use the following definition of renormalized solution for
problem \eqref{D1.1} in the following sense :
\begin{definition}\label{Ddef3.1}  Let $f\in L^1(Q_T)$ and $b(\cdot,u_0(\cdot))\in L^1(\Omega)$. A renormalized solution of problem \eqref{D1.1} is a function $u$ defined on $Q_T$, satisfying the following conditions:
	\begin{gather}
	T_k(u)\in L^p(0,T;W^{1,p}_0(\Omega))  \mbox{ for all $k\geq 0$
		and $b(x,u)\in L^{\infty}(0,T;L^1(\Omega))$,} \label{D2.7}\\
	\int_{\{m\leq |u|\leq m+1\}} a(x,t,u,\nabla u)\nabla u \,dx\,dt\to 0
	\mbox{ as }m\to +\infty, \label{D2.8}\\
	\begin{aligned}
	&\frac{\partial B_S(x, u)}{\partial t}-\operatorname{div }\Big(
	S'(u)
	a(x,t,u,\nabla u)\Big)+S''(u)a(x,t,u,\nabla u)\nabla u\\
	& \qquad\quad+g(x,t,u,\nabla u)S'(u)+H(x,t,\nabla u)S'(u)=fS'(u) \mbox{ in } \mathcal{D}'(Q_T),
	\end{aligned}\label{D2.9}
	\end{gather}
	for all functions $S\in W^{2, \infty}(\mathbb{R})$ which are piecewise $\mathcal{C}^1(\mathbb{R})$, such that $S'$ has a compact support in $\mathbb{R}$ and
	\begin{equation}\label{D2.10}
	B_S(x,u)(t=0)=B_S(x,u_0) \mbox{ in } \Omega,\mbox{ where $\displaystyle B_S(x,z)=\int_0^z \frac{\partial
			b(x,r)}{\partial r}S'(r)dr$.}
	\end{equation}
\end{definition}
\begin{remark}\label{Drmk5.2}
	Equation \eqref{D2.9} is formally obtained through pointwise multiplication of \eqref{D1.1} by $S'(u)$. However, while
	$a(x,t,u,\nabla u)$, $g(x,t,u,\nabla u)$ and $H(x,t,\nabla u)$ does not in general make sense in $\mathcal{D}'(Q_T)$, all the terms in \eqref{D2.9} have a meaning in $\mathcal{D}'(Q_T)$.
	\par Indeed, if $M$ is such that $\mbox{supp} S'\subset [-M,M]$, the following identifications are made in \eqref{D2.9} :
	\begin{itemize}
		\item[$\bullet$] $\displaystyle |B_S(x,u)|=|B_S(x,T_M(u))|\leq M \|S'\|_{L^\infty(\mathbb{R})}A_M(x)$ belongs to $L^\infty(\Omega)$ since $A_M$ is a bounded function.
		\item[$\bullet$]  $S'(u)a(x,t,u,\nabla u)$ identifies with $S'(u)a(x,t,T_M(u),\nabla T_M(u))$ a. e. in $Q_T$. Since $|T_M(u)|\leq
		M$ a. e. in $Q_T$ and $S'(u)\in L^{\infty}(Q_T)$, we obtain from \eqref{D2.3} and \eqref{D2.7} that
		\begin{equation*}
		S'(u)a(x,t,T_M(u),\nabla T_M(u))\in (L^{p'}(Q_T))^N.
		\end{equation*}
		\item[$\bullet$] $S''(u)a(x,t,u,\nabla u)\nabla u$ identifies with
		$S''(u)a(x,t,T_M(u),\nabla T_M(u))\nabla T_M(u)$ and
		\begin{equation*}
		S''(u)a(x,t,T_M(u),\nabla T_M(u))\nabla T_M(u)\in L^1(Q_T).
		\end{equation*}
		\item[$\bullet$] $S'(u)\Big(g(x,t,u,\nabla u)+H(x,t,\nabla u)\Big)$ identifies with\\
		$S'(u)\Big(g(x,t,T_M(u),\nabla T_M(u))+H(x,t,\nabla T_M(u))\Big)$ a. e. in $Q_T$. Since\\ $|T_M(u)|\leq M$ a. e.
		in $Q_T$ and $S'(u)\in L^{\infty}(Q_T)$, we obtain from \eqref{D2.3}, \eqref{D2.6} and \eqref{D2.666} that
		\begin{equation*}
		S'(u)\Big(g(x,t,T_M(u),\nabla T_M(u))+H(x,t,\nabla T_M(u))\Big)\in L^1(Q_T).
		\end{equation*}
		\item[$\bullet$] $S'(u)f$ belongs to $L^1(Q_T)$.
	\end{itemize}
	The above considerations show that \eqref{D2.9} holds in $\mathcal{D}'(Q_T)$ and	that
	\begin{equation}\label{Db15}
	\frac{\partial B_S(x,u)}{\partial t}\in L^{p'}(0,T; W^{-1,\ p'}(\Omega))+L^1(Q_T).
	\end{equation}
	The properties of $S$, assumptions \eqref{D2.2} and \eqref{D2.8} imply that
	\begin{equation}\label{Db16}
	\Big|\nabla B_S(x,u)\Big|\leq \|A_M\|_{L^\infty(\Omega)}|\nabla T_M(u)|
	\|S'\|_{L^\infty(\mathbb{R})}+ M \|S'\|_{L^\infty(\mathbb{R})}B_M(x),
	\end{equation}
	and
	\begin{equation}\label{Db17}
	B_S(x,u)\  \mbox{ belongs to }\ L^p(0,T;W^{1,p}_0(\Omega)).
	\end{equation}
	Then (\ref{Db15}) and (\ref{Db17}) imply that $B_S(x,u)$ belongs to
	$C^0([0,T];L^1(\Omega))$ (for a proof of this trace result see
	~\cite{Po1}), so that the initial condition \eqref{D2.10} makes sense.
	\par Also remark that, for every $S\in W^{1,\infty}(\mathbb{R})$, nondecreasing function such that supp $S'\subset [-M,M]$, in view of \eqref{D2.2} we have
	\begin{equation*}
	\lambda_M|S(r)-S(r')|\leq \Big|B_S(x,r)-B_S(x,r')\Big|\leq
	\|A_M\|_{L^\infty(\Omega)}|S(r)-S(r')|,\label{D2.11} \mbox{ a. e. } x\in \Omega, \, \forall \,r, r'\in \mathbb{R}.
	\end{equation*}
\end{remark}
\section{Statements of results}
\quad The main results of this article are stated as follows.
\begin{theorem} \label{Dthm1}
	Let $f\in L^1(Q_T)$ and $u_0$ is a measurable function such that $b(\cdot,u_0)\in L^1(\Omega)$. Assume that \eqref{D2.2}--\eqref{D2.666} hold true. Then, there exists a renormalized solution $u$ of  problem \eqref{D1.1} in the sense of Definition \ref{Ddef3.1}.
\end{theorem}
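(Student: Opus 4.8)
The plan is to construct the solution by approximation, derive uniform a priori estimates, and pass to the limit in the renormalized formulation \eqref{D2.9}. First I would regularize the lower order terms and the data: set $g_n(x,t,s,\xi)=g(x,t,s,\xi)/(1+n^{-1}|g(x,t,s,\xi)|)$ and $H_n(x,t,\xi)=H(x,t,\xi)/(1+n^{-1}|H(x,t,\xi)|)$, which are bounded Carath\'eodory functions still satisfying the sign condition \eqref{D2.66} and the bounds \eqref{D2.6}, \eqref{D2.666} uniformly, and choose $f_n\in L^\infty(Q_T)$ with $f_n\to f$ in $L^1(Q_T)$, $\|f_n\|_{L^1}\le\|f\|_{L^1}$, together with a smooth $u_{0n}$ with $b(\cdot,u_{0n})\to b(\cdot,u_0)$ in $L^1(\Omega)$. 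For each fixed $n$ the approximate problem has bounded lower order terms and $L^\infty$ (hence $L^{p'}(0,T;W^{-1,p'}(\Omega))$) data, so a weak solution $u_n$ with $T_k(u_n)\in L^p(0,T;W_0^{1,p}(\Omega))$ exists by the standard theory for doubly nonlinear pseudomonotone operators.

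The a priori estimates are the heart of the argument. Taking $T_k(u_n)$ as a test function and using the primitive $\Theta_k(x,s)=\int_0^s \partial_r b(x,r)\,T_k(r)\,dr\ge 0$ to handle the time term, the coercivity \eqref{D2.5} gives $\alpha\int_{Q_T}|\nabla T_k(u_n)|^p$ on the left, while the sign condition \eqref{D2.66} makes $\int_{Q_T} g_n(x,t,u_n,\nabla u_n)T_k(u_n)\ge0$. The obstruction is the non-coercive term $H$: on $\{|u_n|>k\}$ one only controls $k\int h|\nabla u_n|^{p-1}$, which cannot be absorbed by $\int|\nabla T_k(u_n)|^p$. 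Here the coercivity-type hypothesis \eqref{Dg.3} is decisive: for $k\ge\delta$ one has $g_nT_k(u_n)\ge\delta\,|g_n|\ge\delta\nu|\nabla u_n|^p$ on $\{|u_n|\ge\delta\}$, so the good $g$-term dominates a full power $|\nabla u_n|^p$ there. Splitting the $H$-integral and applying Young's inequality (using $h\in L^p(Q_T)$, $L_2\in L^1(Q_T)$), the part over $\{|u_n|>k\}$ is absorbed by the $g$-term and the part over $\{|u_n|\le k\}$ by the $a$-term. This yields, uniformly in $n$, a bound on $\|T_k(u_n)\|_{L^p(0,T;W_0^{1,p})}$ for every $k$, a bound on $\int_{\{|u_n|\ge\delta\}}|\nabla u_n|^p$, and, from the time term, $\|b(\cdot,u_n)\|_{L^\infty(0,T;L^1)}\le C$. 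A further test with $T_1(u_n-T_m(u_n))$ gives the level-set energy decay \eqref{D2.8}.

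Next I would extract convergent subsequences. From the uniform gradient bound on truncations and the control of $\partial_t b(x,u_n)$ in $L^1(Q_T)+L^{p'}(0,T;W^{-1,p'}(\Omega))$ provided by the equation, an Aubin--Simon compactness argument applied to $b(x,u_n)$ (together with the strict monotonicity of $b(x,\cdot)$) yields $u_n\to u$ a.e. in $Q_T$ and $T_k(u_n)\rightharpoonup T_k(u)$ weakly in $L^p(0,T;W_0^{1,p}(\Omega))$. The decisive analytic step is then to upgrade this to strong convergence $\nabla T_k(u_n)\to\nabla T_k(u)$ in $(L^p(Q_T))^N$. I would prove this by the Boccardo--Murat/Minty monotonicity method: test the difference of equations with $(T_k(u_n)-T_k(u))$ regularized in time (a Landes-type time mollification is needed because $\partial_t b$ is only a measure-like object), show that $\limsup_n\int_{Q_T}[a(x,t,T_k(u_n),\nabla T_k(u_n))-a(x,t,T_k(u_n),\nabla T_k(u))](\nabla T_k(u_n)-\nabla T_k(u))\le0$, and invoke the strict monotonicity \eqref{D2.4}. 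Along the way the terms $g_n$ and $H_n$ are controlled using the uniform estimate on $\{|u_n|\ge\delta\}$ and equi-integrability; in particular $g_n(x,t,u_n,\nabla u_n)\to g(x,t,u,\nabla u)$ and $H_n(x,t,\nabla u_n)\to H(x,t,\nabla u)$ strongly in $L^1(Q_T)$ by Vitali's theorem.

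Finally I would pass to the limit in \eqref{D2.9}. Fixing $S\in W^{2,\infty}(\mathbb{R})$ with $\operatorname{supp}S'\subset[-M,M]$ confines every nonlinearity to the region $\{|u_n|\le M\}$, where the truncated-gradient convergence of the previous step applies: $S'(u_n)a(x,t,T_M(u_n),\nabla T_M(u_n))\rightharpoonup S'(u)a(x,t,T_M(u),\nabla T_M(u))$ in $(L^{p'}(Q_T))^N$, the quadratic term $S''(u_n)a\cdot\nabla u_n$ converges in $L^1$, and the lower order and source terms converge by the $L^1$-convergences just obtained, while $\partial_t B_S(x,u_n)\to\partial_t B_S(x,u)$ in $\mathcal{D}'(Q_T)$; the regularity \eqref{Db15}--\eqref{Db17} gives $B_S(x,u)\in C^0([0,T];L^1(\Omega))$ so that the initial condition \eqref{D2.10} is attained. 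The main obstacle throughout is the interaction of the $L^1$ data and the doubly nonlinear time term with the two non-coercive lower order terms: the time-regularization required to legitimately use truncations as test functions must be carried out so that the extra commutator terms vanish, and the sole mechanism preventing loss of gradient mass at infinity is the coupling between \eqref{Dg.3} and the sign condition \eqref{D2.66}, which must be exploited consistently both in the a priori estimates and in the gradient-convergence argument.
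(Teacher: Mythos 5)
Your overall architecture coincides with the paper's: the same approximations $g_n$, $H_n$, $f_n$, $u_{0n}$, a global gradient bound obtained by playing the coercivity \eqref{Dg.3} of $g$ against the non-coercive term $H$, Landes time-regularization plus the monotonicity method to get $\nabla T_k(u_n)\to\nabla T_k(u)$ strongly, Vitali's theorem for the lower order terms, and passage to the limit in \eqref{D2.9}. The one genuinely different sub-argument is the interior estimate $\int_{\{|u_n|\le k\}}|\nabla u_n|^p$: the paper derives a bound \emph{linear} in $k$ via a level-set/rearrangement technique (test functions $\varphi_\varepsilon$, the Fleming--Rishel coarea formula, the isoperimetric inequality and Gronwall's lemma, cf.\ \eqref{Deq.im311}--\eqref{Deq.im324}), whereas you absorb $k\int_{\{|u_n|\le k\}}h|\nabla u_n|^{p-1}$ directly by Young's inequality into $\alpha\int|\nabla T_k(u_n)|^p$. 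Your route is more elementary but forces $\varepsilon\sim k^{-1}$ in Young's inequality and so only yields a bound of order $k^p$ rather than $k$; this is still uniform in $n$ for each fixed $k$, and since the exterior estimate and hence the global bound \eqref{Dbounded} only require the interior estimate at the single level $k=\delta$, everything downstream (weak convergence of truncations, equi-integrability of $H_n$ via the global $L^p$ bound on $\nabla u_n$, the decay \eqref{D2.8}) survives. You lose the sharp linear growth that the symmetrization argument provides, but nothing in this proof needs it.

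There is one point where your sketch, as written, is circular and needs repair. In the gradient-convergence step you propose to control the term $\int_{\{|u_n|\le k\}}g_n(x,t,u_n,\nabla u_n)(\cdots)$ ``using the uniform estimate on $\{|u_n|\ge\delta\}$ and equi-integrability.'' On $\{|u_n|\le k\}$ the bound \eqref{D2.6} gives $|g_n|\le L_1(k)(L_2+|\nabla T_k(u_n)|^p)$, and the equi-integrability of $|\nabla T_k(u_n)|^p$ is exactly what is \emph{not} yet available at that stage: in the paper it is a consequence of the strong convergence $T_k(u_n)\to T_k(u)$ in $L^p(0,T;W^{1,p}_0(\Omega))$ that this very step is trying to establish (Step 3 of the paper explicitly invokes the conclusion of Step 2). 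The standard device, which the paper uses and you must incorporate, is to take as test function not the time-regularized $T_k(u_n)-T_k(u)$ itself but $\varphi(T_k(u_n)-w^i_\mu)h_m(u_n)$ with $\varphi(s)=se^{\gamma s^2}$ and $\gamma>\bigl(\tfrac{L_1(k)}{2\alpha}\bigr)^2$; the inequality $\varphi'-\tfrac{L_1(k)}{\alpha}|\varphi|\ge\tfrac12$ of \eqref{D26} then lets the dangerous contribution $\tfrac{L_1(k)}{\alpha}\int a(x,t,T_k(u_n),\nabla T_k(u_n))\nabla T_k(u_n)\,|\varphi|$ be absorbed into the monotonicity expression $\int[a(x,t,T_k(u_n),\nabla T_k(u_n))-a(x,t,T_k(u_n),\nabla T_k(u))](\nabla T_k(u_n)-\nabla T_k(u))$ rather than estimated by equi-integrability. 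With that substitution your argument closes; without it, the natural-growth term blocks the $\limsup\le0$ conclusion.
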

\begin{proof} The proof of Theorem \ref{Dthm1} is done in five steps.
	\subsection*{Step 1 : Approximate problem and a priori estimates}
	\quad For $n>0$, let us define the following approximation of $b,\, f$ and $u_0$.\par First, set
	$ b_n(x,r)=b(x,T_n(r))+\frac{1}{n}r.$ $b_n$ is a Carath\'eodory function and satisfies \eqref{D2.2}, there exist $\lambda_n>0$ and functions $A_n\in L^\infty(\Omega)$ and $B_n\in L^p(\Omega)$ such that $\lambda_n\leq \frac{\partial b_n(x,s)}{\partial s}\leq A_n(x)$ and $\big|\nabla_x \Big(\frac{\partial b_n(x,s)}{\partial s}\Big)\big|\leq B_n(x),$ a. e. in $\Omega$, $s\in\mathbb{R}$. \par Next, set
	$g_{n}(x, t, s, \xi)=\frac{g(x, t, s, \xi)}{1+\frac{1}{n}|g(x, t, s, \xi)|} \mbox{ and } H_{n}(x, t, \xi)=\frac{H(x, t,\xi)}{1+\frac{1}{n}|H(x, t, \xi)|}.$
	\par Note that
	$|g_{n}(x, t, s, \xi)| \leq \max \{|g(x, t, s, \xi)|; n \} \mbox{ and } |H_{n}(x, t, \xi)| \leq \max \{|H(x, t, \xi)|; n \}.$
	\par Moreover, since $f_n\in L^{p'}(Q_T)$ and $ f_n\rightarrow f$ a. e. in $Q_T$ and strongly in $L^1(Q_T)$ as $n\rightarrow\infty.$
	\begin{equation}
	u_{0n}\in \mathcal{D}(\Omega),\ b_n(x,u_{0n})\rightarrow b(x,u_{0}) \mbox{ a. e. in }  \Omega \mbox{ and strongly in } L^1(\Omega) \mbox{ as } n\rightarrow\infty.\label{Du0n}
	\end{equation}
	Let us now consider the approximate problem
	\begin{equation}\left\{
	\begin{array}{llll}
	\displaystyle {\partial b_n(x,u_n) \over \partial t} - \mbox{div}
	(a(x,t,u_n,\nabla u_n))+g_{n}(x,t,u_n,\nabla u_n) +H_{n}(x,t,\nabla u_n)
	&=& f_n &\mbox{  in }\ Q_T,\\
	b_n(x,u_n)(t=0)&=&b_n(x,u_{0n})&\mbox{  in } \Omega,\\
	u_{n}&=&0&\mbox{  in } \partial\Omega\times(0,T).
	\end{array} \right. \label{Dpn}
	\end{equation}
	Since $f_n\in L^{p'}(0,T;W^{-1,\ p'}(\Omega))$, proving existence of a weak solution  $u_n\in $ $L^p(0,T;W^{1,\ p}_0(\Omega))$
	of \eqref{Dpn} is an easy task (see e.g. \cite[p.~271]{Li}), i. e.
	\begin{align*}
	&\displaystyle \int_0^T\langle \displaystyle {\partial b_n(x,u_n) \over \partial t},v\rangle dt+\int_{Q_T} a(x,t,u_n,\nabla u_n)\nabla v\,dx\,dt +\int_{Q_T} g_{n}(x,t,u_n,\nabla u_n)v\,dx\,dt \\
	&+\int_{Q_T} H_{n}(x,t,\nabla u_n)v\,dx\,dt=\int_{Q_T} f_nv\,dx\,dt,\mbox{ for all $v \in  L^{p}(0, T; W^{1,p}(\Omega)) \cap L^{\infty}(Q_{T})$}.
	\end{align*}
	\par Now, we prove the solution $u_n$ of problem \eqref{Dpn} is bounded in $L^p(0,T;W^{1,\ p}_0(\Omega))$,
	\begin{lemma}Let $u_n \in L^p(0,T;W^{1,\ p}_0(\Omega))$ be a weak solution of \eqref{Dpn}. Then, the following estimates holds,
		\begin{eqnarray}||u_n ||_{L^p(0,T;W^{1,\ p}_0(\Omega))}\leq D ,\label{Dbounded}\end{eqnarray}
		where $D$ depend only on $\Omega,$ $T$, $N$, $p$, $p'$, $f$ and $||h||_{L^{p}(Q_T)}$.\label{Dthm.36}\end{lemma}
	\begin{proof} To get \eqref{Dbounded}, we divide the integral $\displaystyle{\int_{Q_T}}|\nabla u_n|^{p}\,dx\,dt$ in two parts and we prove the following estimates : for all $k\geq 0$
		\begin{eqnarray}
		&\displaystyle{\int_{\{|u_n|\leq k\}}}|\nabla u_n|^{p} \,dx\,dt\leq M_1  k,\label{Deq.im318}\\
		&\mbox{ and } \displaystyle{\int_{\{|u_n|> k\}}}|\nabla u_n|^{p} \,dx\,dt \leq M_2, \label{Deq.im319}
		\end{eqnarray}
		where $M_1$ and $M_2$ are positive constants. In what follows we will denote by $M_i$, $i=3,4,...$, some generic positive constants. We suppose $p<N$, (the case $p\geq N$ is similar). For $\varepsilon>0$ and $s\geq 0$, we define
		$\displaystyle \varphi_\varepsilon(r)= \left\{\begin{array}{lcl}
		\mbox{sign}(r)                                           & \mbox{if} & |r| >s+\varepsilon , \\
		\displaystyle\frac{\mbox{sign}(r)(|r|-s ) }{\varepsilon} & \mbox{if} & s<|r| \leq s+\varepsilon , \\
		0 & \         & \mbox{otherwise}.
		\end{array}\right.$
		We choose $v= \varphi_\varepsilon(u_n)$ as test function in \eqref{Dpn}, we have
		\begin{align*}
		&\left[\int_{\Omega} B_{\varphi_\varepsilon}^n(x,u_n)\,dx\right]_0^T+\int_{Q_T} a(x,t,u_n,\nabla u_n)\nabla ( \varphi_\varepsilon(u_n))\,dx\,dt\\
		&+\int_{Q_T} g_{n}(x,t,u_n,\nabla u_n)\varphi_\varepsilon(u_n) \,dx\,dt +\int_{Q_T} H_{n}(x,t,\nabla u_n)\varphi_\varepsilon(u_n)\,dx\,dt\\
		&=\int_{Q_T} f_n\varphi_\varepsilon(u_n)\,dx\,dt,\mbox{ where $B_{\varphi_\varepsilon}^n(x,r)=\displaystyle{\int_0^r\frac{\partial b_n(x,s)}{\partial s} \varphi_\varepsilon(s)\,ds}$.}\end{align*}
		\par Using $B_{\varphi_\varepsilon}^n(x,r)\geq0$, $g_{n}(x,t,u_n,\nabla u_n)\varphi_\varepsilon(u_n)\geq0$, \eqref{D2.5}, \eqref{D2.666}, H\"{o}lder inequality and Letting $\varepsilon$ go to zero, we obtain
		\begin{equation*} \begin{array}{l}
		\displaystyle{\frac{-d}{ds}\int_{\{s<|u_n|\}} \alpha|\nabla u_n|^p\,dx\,dt}\leq\displaystyle{\int_{\{s<|u_n|\}} |f_n|\,dx\,dt}\\
		\quad+\displaystyle{\int_s^{+\infty}\left(\frac{-d}{d\sigma}\int_{\{\sigma<|u_n|\}}h^p\,dx\,dt\right)^{\frac{1}{p}}\left(\frac{-d}{d\sigma}\int_{\{\sigma<|u_n|\}}|\nabla u_n|^{p}\,dx\,dt\right)^{\frac{1}{p'}}\,d\sigma},\end{array}\end{equation*}
		where $\{s<|u_n|\}$ denotes the set $\{(x,t)\in Q_T, s<|u_n(x,t)|\}$ and $\mu(s)$ stands for the distribution function of $u_n$, that is $\mu(s)=|\{(x,t)\in Q_T,\ |u_n(x,t)|>s\}|$ for all $s\geq0$.
		\par On the other hand, from Fleming-Rishel coarea formula and isoperimetric inequality, we have for almost every $s>0$
		\begin{equation}
		N C_N^{\frac{1}{N}}(\mu(s))^{\frac{N-1}{N}}\leq-\frac{d}{d s}\displaystyle{\int_{\{s<|u_n| \}}}|\nabla u_n|^{p}dx\,dt,\label{Equation-Rev1}
		\end{equation}
		where $C_N$ is the measure of the unit ball in $\mathbb{R}^N$. Using the H\"{o}lder's inequality we obtain that for almost every $s>0$
		\begin{eqnarray}
		-\frac{d}{d s}\displaystyle{\int_{\{s<|u_n| \}}}|\nabla u_n|^{p}dx\,dt\leq(-\mu'(s))^{\frac{1}{p'}} \left(-\frac{d}{d s}\displaystyle{\int_{\{s<|u_n| \}}}|\nabla u_n|^{p}dx\,dt\right)^{\frac{1}{p}}.\label{Equation-Rev2}
		\end{eqnarray}
		Then, combining \eqref{Equation-Rev1} and \eqref{Equation-Rev2} we obtain for almost every $s>0$
		\begin{eqnarray}
		1 \leq \left(N C_N^{\frac{1}{N}}\right)^{-1}(\mu(s))^{\frac{1}{N}-1}(-\mu'(s))^{\frac{1}{p'}} \left(-\frac{d}{d s}\displaystyle{\int_{\{s<|u_n| \}}}|\nabla u_n|^{p}dx\,dt\right)^{\frac{1}{p}}.\label{Deq.im310}\end{eqnarray}
		Using (\ref{Deq.im310}), we have
		\begin{equation}\label{Deq.im311}\begin{array}{l}
		\alpha\left(\displaystyle{\frac{-d}{ds}\int_{\{s<|u_n|\}} |\nabla u_n|^p\,dx\,dt}\right)^{\frac{1}{p'}}\\ \leq\left(NC_N^{\frac{1}{N}}\right)^{-1}(\mu(s))^{\frac{1}{N}-1}(-\mu'(s))^{\frac{1}{p'}}\left(\displaystyle{\int_{\{s<|u_n|\}}|f_n|\,dx\,dt}\right)
		\\
		\quad\quad\quad+\left(NC_N^{\frac{1}{N}}\right)^{-1}(\mu(s))^{\frac{1}{N}-1}(-\mu'(s))^{\frac{1}{p'}}\\ \quad\quad\quad\times\displaystyle{\int_s^{+\infty}\left(\frac{-d}{d\sigma}\int_{\{\sigma<|u_n|\}}h^p\,dx\,dt\right)^{\frac{1}{p}}\left(\frac{-d}{d\sigma}\int_{\{\sigma<|u_n|\}}|\nabla u_n|^{p}\,dx\,dt\right)^{\frac{1}{p'}}\,d\sigma}.\end{array}\end{equation}
		Now, we consider two functions $B$ and $\psi$ (see Lemma 2.2 of \cite{A.T}) defined by
		\begin{eqnarray}
		&\displaystyle{ \int_{\{s<|u_n| \}}}h^p(x,t)\,dx\,dt= \int_{0}^{\mu(s)}B^p(\sigma)d \sigma.\label{Deq.im312}\\
		&\mbox{and }\psi(s)= \displaystyle{\int_{\{s<|u_n| \}}}  | f_n| \,dx\,dt.\label{Deq.im313}
		\end{eqnarray}
		We have $\label{Deq.im314}||B||_{L^p(0,T;W^{1,p}_0(\Omega))}\leq||h||_{L^p(0,T;W^{1,p}_0(\Omega))}\mbox{ and }|\psi(s)|\leq || f_n||_{L^1(Q_T)}. $
		From (\ref{Deq.im311}), (\ref{Deq.im312}) and (\ref{Deq.im313}) we have
		$$\begin{array}{l}
		\alpha\left(\frac{-d}{ds}\displaystyle{\int_{\{s<|u_n|\}}}|\nabla u_n|^{p}dxdt\right)^{\frac{1}{p'}} \leq  \\
		\quad (N C_N^{\frac{1}{N}})^{-1}(\mu(s))^{\frac{1}{N}-1}  (-\mu'(s))^{\frac{1}{p'}}\psi(s)+(N C_N^{\frac{1}{N}})^{-1}(\mu(s))^{\frac{1}{N}-1}\\
		\quad \times(-\mu'(s))^{\frac{1}{p'}}
		\displaystyle{\int_{s}^{+\infty}}B(\mu(\nu))(-\mu'(\nu))^{\frac{1}{p}}\left(-\frac{d}{d \nu}\displaystyle{\int_{\{\nu<|u_n| \}}}|\nabla u_n|^{p}dxdt\right)^{\frac{1}{p'}}d\nu.
		\end{array}$$
		From Gronwall's Lemma (see \cite{B.B}), we obtain
		\begin{equation}\begin{array}{l}
		\alpha\left(\frac{-d}{ds}\displaystyle{\int_{\{s<|u_n|\}}}|\nabla u_n|^{p}\,dx\,dt\right)^{\frac{1}{p'}}
		\leq \\
		(N C_N^{\frac{1}{N}})^{-1}(\mu(s))^{\frac{1}{N}-1}  (-\mu'(s))^{\frac{1}{p'}}\psi(s)+(NC_N^{\frac{1}{N}})^{-1}(\mu(s))^{\frac{1}{N}-1}\\
		\times(-\mu'(s))^{\frac{1}{p'}}\displaystyle{\int_{s}^{+\infty}}\Big[(N C_N^{\frac{1}{N}})^{-1}(\mu(\sigma))^{\frac{1}{N}-1} \psi(\sigma)\Big] B(\mu(\sigma))(-\mu'(\sigma))\\
		\times\mbox{ exp} \left(\displaystyle{\int_{s}^{\sigma}}(N C_N^{\frac{1}{N}})^{-1}) B(\mu(r))(\mu(r))^{\frac{1}{N}-1}(-\mu'(r))dr\right)d \sigma.
		\end{array}\label{Deq.im324}
		\end{equation}
		Now, by a variable of change and by H\"{o}lder inequality, we estimate the argument of the exponential function on the right hand side of (\ref{Deq.im324})
		\begin{equation*}
		\begin{array}{lll}
		\displaystyle{\int_{s}^{\sigma}}B(\mu(r))(\mu(r))^{\frac{1}{N}-1}(-\mu'(r))dr&=&\displaystyle{\int_{s}^{\sigma}}B(z)z^{\frac{1}{N}-1}dz\\
		\ &\leq&\displaystyle{\int_{0}^{|\Omega|}}B(z)z^{\frac{1}{N}-1}dz\\
		\ &\leq &||B||_{L^p}\left(\displaystyle{\int_{0}^{|\Omega|}}z^{(\frac{1}{N}-1)p'}\right) ^{\frac{1}{p'}}.
		\end{array}
		\label{Deq.im325}
		\end{equation*}
		Raising to the power $p'$ in \eqref{Deq.im324} and we can write
		\begin{eqnarray*}
			\frac{-d}{ds}\displaystyle{\int_{\{s<|u_n|\}}}|\nabla u_n|^{p}\,dx\,dt\leq M_1.\label{Deq.im326}
		\end{eqnarray*}
		where $M_1$ depend only on $\Omega,$ $N$, $p$, $p'$, $f$, $\alpha$ and $||h||_{L^{p}(Q_T)}$, integrating between $0$ and $k$, \eqref{Deq.im318} is proved.
		\par We now give the proof of \eqref{Deq.im319}, using $T_k(u_n)$ as test function in \eqref{Dpn}, gives
		$$
		\begin{array}{lll}
		\displaystyle{\left[\int_{\Omega} B_{k}^n(x,u_n)\,dx\right]_0^T}+\displaystyle{\int_{\Omega}a(x,t,u_n,\nabla u_n)\nabla T_k(u_n)\,dx\,dt} \\
		\qquad\quad  +\displaystyle{ \int_{\Omega}(g_{n}(x,t,u_n,\nabla u_n)+H_{n}(x,t,\nabla u_n)) T_k(u_n)\,dx\,dt}\\
		\qquad\quad = \displaystyle{\int_{\Omega}f_n T_k(u_n)\,dx\,dt },
		\end{array}
		$$
		where $B_{k}^n(x,r)=\displaystyle{\int_0^r\frac{\partial b_n(x,s)}{\partial s} T_k(s)\,ds}$. Using \eqref{D2.666}, we deduce that,\begin{equation*}
		\begin{array}{l}\displaystyle{\left[\int_{\Omega} B_{k}^n(x,u_n)\,dx\right]_0^T}+\displaystyle{\int_{\{|u_n| \leq k\}}a(x,t,u_n,\nabla u_n)\nabla u_n\,dx\,dt} \\
		+\displaystyle{ \int_{\{|u_n| \leq k\}}g_{n}(x,t,u_n,\nabla u_n)u_n\,dx}+\displaystyle{ \int_{\{|u_n| > k\}}g_{n}(x,t,u_n,\nabla u_n)T_k(u_n)\,dx\,dt}\\
		\leq \displaystyle{\int_{\Omega}f_n T_k(u_n)\,dx\,dt +\displaystyle{\int_{\Omega}}h(x,t)|\nabla u_n|^{p-1}|T_k(u_n)|\,dx\,dt,}\end{array}\label{Deq.im327}
		\end{equation*} and by using the fact that $B_{k}^n(x,r)\geq0$, $g_{n}(x,t,u_n,\nabla u_n) u_n\geq 0$ and \eqref{D2.5}, we have
		$$
		\begin{array}{lll}
		\alpha\displaystyle{\int_{\{|u_n| \leq k\}}}|\nabla u_n|^{p}\,dx\,dt+\displaystyle{\int_{\{|u_n| > k\}}}g(x,u_n,\nabla u_n)T_k(u_n)\,dx\,dt\\
		\quad\quad\leq k||f||_{L^1} +k\displaystyle{\int_{\{|u_n| \leq k\}}}h(x,t)|\nabla u_n|^{p-1}\,dx\,dt\\
		\quad\quad\quad+k\displaystyle{\int_{\{|u_n| \geq k\}}}h(x,t)|\nabla u_n|^{p-1}\,dx\,dt.
		\end{array}$$
		By H\"{o}lder inequality and \eqref{Deq.im318}, \eqref{Dg.3} and applying Young's inequality, we get for all $k>\delta$
		\begin{equation*}
		\begin{array}{l}
		\nu k \displaystyle{\int_{\{|u_n| > k\}}}|\nabla u_n|^{p}\,dx\,dt\leq k||f||_{L^1(Q_T)} +k^{1+\frac{1}{p'}}M_1||h ||_{L^{p}Q_T)}+k\displaystyle{\int_{\{|u_n| > k\}}}h(x,t)|\nabla u_n|^{p-1}\,dx\,dt\\
		\leq k||f||_{L^1(Q_T)}+k^{1+\frac{1}{p'}}M_1||h ||_{L^{p}Q_T)}
		+M_6 k||h||_{L^p}^p+\frac{1}{p'}\nu k\displaystyle{\int_{\{|u_n| > k\}}}|\nabla u_n|^{p}\,dx\,dt.
		\end{array} \label{Deq.im328}
		\end{equation*}
		Hence
		\begin{equation}
		(1-\frac{1}{p'})\displaystyle{\int_{\{|u_n| > k\}}}|\nabla u_n|^{p}\,dx\,dt
		\leq M_3 ||f||_{L^1(Q_T)}+k^{\frac{1}{p'}}M_5||h ||_{L^{p}(Q_T)}+M_7||h||_{L^p}^p,
		\label{Deq.im331}
		\end{equation}
		and Lemma \ref{Dthm.36} is proved.
	\end{proof}
	Then there exists $u\in L^p(0,T;W^{1,p}_0(\Omega))$ such that, for some subsequence
	\begin{equation}\begin{array}{l}
	u_n \rightharpoonup u \mbox{ weakly in } L^p(0,T;W^{1,p}_0(\Omega)),
	\end{array}\label{D24}\end{equation}
	we conclude that
	\begin{equation}
	||T_k(u_n)||^p_{L^p(0,T;W^{1,p}_0(\Omega))}\leq c_2k. \label{Dtk}
	\end{equation}
	We deduce from the above inequalities, \eqref{D2.2} and \eqref{Dtk},
	that
	\begin{equation}
	\int_{\Omega} B _k^n(x,u_n)dx\leq  Ck,\label{D514}
	\end{equation}
	where $\displaystyle B _k^n(x,z)= \int_0^z {\partial b_n(x,s)\over \partial s} T_k(s)\,ds.$
	\par Now, we turn to prove the almost every convergence of $u_n$ and
	$b_n(x,u_n)$. Consider now a function non decreasing $\xi_k\in
	C^2(\mathbb{R})$ such that $\xi_k(s)=s$  for $|s|\leq \frac{k}{2}$ and
	$\xi_k(s)=k$ for $|s|\geq k $. Multiplying the approximate equation by
	$\xi'_k(u_n)$, we obtain
	\begin{equation}
	\displaystyle \frac{\partial B^n_\xi(x,u_n)}{\partial t}-\operatorname{div}
	\Big(a(x,t,u_n,\nabla u_n)\xi'_k(u_n)\Big)
	+a(x,t,u_n,\nabla u_n)\xi''_k(u_n)\nabla u_n\label{D3.15}
	\end{equation}
	$$\begin{array}{l}\displaystyle+\Big(g_{n}(x,t,u_n,\nabla u_n)+H_{n}(x,t,\nabla u_n)\Big)\xi'_k(u_n)\\
	\quad\quad=f_n \xi'_k(u_n),\end{array}$$
	in the sense of distributions, where $ B^n_\xi(x,z)=\displaystyle\int_0^z\frac{\partial b_n(x,s)}{\partial s}\xi'_k(s) ds$. As a consequence of \eqref{Dtk}, we deduce that $\xi_k(u_n)$ is bounded in $L^p(0,T;W^{1,p}_0(\Omega))$ and $\frac{\partial B_\xi^n(x,u_n)}{\partial t}$ is bounded in $L^1(Q_T)+L^{p'}(0,T;W^{-1,p'}(\Omega))$. Due to the properties of $\xi_k$ and \eqref{D2.2}, we conclude that $\displaystyle\frac{\partial \xi_k(u_n)}{\partial t}$ is bounded in $L^1(Q_T)+L^{p'}(0,T;W^{-1,p'}(\Omega))$, which implies that $\xi_k(u_n)$ strongly converges in $L^1(Q_T)$ (see \cite{Po1}).
	\par Due to the choice of $\xi_k$, we conclude that for each $k$, the sequence $T_k(u_n)$ converges almost everywhere in $Q_T$, which implies that $u_n$ converges almost everywhere to some measurable function $u$ in $Q_T$.
	Thus by using the same argument as in \cite{B.M,B.M.R} and \cite{Re3}, we can show
	\begin{equation}
	u_n\to u \mbox{ a. e. in }Q_T, \label{D3.16}\end{equation}\begin{equation*}
	b_n(x,u_n)\to b(x,u) \mbox{ a. e. in } Q_T.\label{D3.17}
	\end{equation*}
	We can deduce from \eqref{Dtk} that
	\begin{equation*}
	T_k(u_n)\rightharpoonup T_k(u) \mbox{ weakly in }L^p(0,T;W^{1,p}_0(\Omega)). \label{D3.18}
	\end{equation*}
	Which implies, by using \eqref{D2.3}, for all $k>0$ that there exists
	a function \\ $\overline{a}\in ( L^{p'}(Q_T))^N$, such that
	\begin{equation}
	a(x,t,T_k(u_n),\nabla T_k(u_n))\rightharpoonup \overline{a}   \mbox{ weakly in }
	( L^{p'}(Q_T))^N. \label{D3.19}
	\end{equation}
	\par We now establish that $b(.,u)$ belongs to
	$L^{\infty}(0,T;L^1(\Omega))$. Using \eqref{D3.16} and  passing to the
	limit-inf in \eqref{D514} as $n$ tends to $+\infty$, we obtain that
	$$
	\frac{1}{k}\int_{\Omega}B_k(x,u)(\tau)dx \leq C,
	$$
	for almost any $\tau$ in $(0,T)$. Due to the definition of
	$B_k(x,s)$ and the fact that $\frac{1}{k}B_k(x,u)$ converges
	pointwise to $b(x,u)$, as $k$ tends to $+\infty$, shows that
	$b(x,u)$ belong to $L^{\infty}(0,T;L^1(\Omega))$.
	\begin{lemma} \label{Dlem5.4}
		Let $u_n$ be a solution of the approximate problem \eqref{Dpn}. Then
		\begin{equation}\lim _{m\to
			\infty}\limsup_{n\to \infty}\int_{\{m\leq|u_n|\leq m+1\}}
		a(x,t,u_n,\nabla u_n)\nabla u_n\,dx\,dt=0 \label{D3.20}
		\end{equation}
	\end{lemma}
	\begin{proof} We use $T_1(u_n-T_m(u_n))^+=\alpha_m(u_n)\in L^p(0,T;W^{1,p}_0(\Omega))\cap L^\infty(Q_T)$ as test function in \eqref{Dpn}. Then, we have
		\begin{align*}
		& \int_0^T\langle {\partial b_n(x,u_n)\over \partial t}\ ;\ \alpha_m(u_n)\rangle
		\,dt +\int_{\{m\leq u_n\leq m+1\}} a(x,t,u_n,\nabla u_n)\nabla u_n
		\alpha'_m(u_n)
		\,dx\,dt \\
		&\quad +\int_{Q_T}\Big(g_{n}(x,t,u_n,\nabla u_n)+H_{n}(x,t,\nabla u_n)\Big)  \alpha_m(u_n) \,dx\,dt\\
		&\quad\quad\leq  \int_{Q_T}|f_n  \alpha_m(u_n)| \,dx\,dt.
		\end{align*}
		Which,  by setting $\displaystyle B^n_{m}(x,r)=\int_0^r \frac{\partial
			b_n(x,s)}{\partial s}\alpha_m(s) ds$, \eqref{D2.66} and \eqref{D2.666} gives
		\begin{align*}
		&\int_{\Omega}B^n_{m}(x,u_n)(T)dx +\int_{\{m\leq u_n\leq
			m+1\}} a(x,t,u_n,\nabla u_n)\nabla u_n  \,dx\,dt \\
		& \leq \int_{\{m\leq u_n\}}|f_n | \,dx\,dt++\int_{Q_T} h(x,t)|\nabla u_n|^{p-1}\,dx\,dt .
		\end{align*}
		Now we use H\"{o}lder's inequality and \eqref{Dbounded}, in order to deduce
		\begin{align*}
		&\int_{\Omega}B^n_{m}(x,u_n)(T)dx + \int_{\{m\leq u_n\leq
			m+1\}} a(x,t,u_n,\nabla u_n)\nabla u_n  \,dx\,dt \\
		& \leq \int_{\{m\leq u_n\}}|f_n | \,dx\,dt+c_1\left(\int_{\{m\leq u_n\}}|h(x,t)|^p\,dx\,dt\right)^{\frac{1}{p'}}.
		\end{align*}
		Since $B^n_{m}(x,u_n)(T)\geq 0$ and the strong convergence of $f_n$ in $L^1(Q_T)$, by Lebesgue's theorem, we have
		\begin{equation*}\lim _{m\to
			\infty}\lim_{n\to \infty}\int_{\{m\leq u_n\}}|f_n | \,dx\,dt=0.\label{Dfexp}
		\end{equation*}
		Similarly, since $h \in L^p(Q_T)$, we obtain
		\begin{equation*}\lim _{m\to
			\infty}\lim_{n\to \infty}\left(\int_{\{m\leq u_n\}}|h(x,t)|^p\,dx\,dt\right)^{\frac{1}{p'}}=0.\label{Dgexp}
		\end{equation*}
		We conclude that
		\begin{equation}\lim _{m\to
			\infty}\limsup_{n\to \infty}\int_{\{m\leq u_n\leq m+1\}}
		a(x,t,u_n,\nabla u_n)\nabla u_n\,dx\,dt=0 .\label{Dan1}
		\end{equation}
		On the other hand, using $T_1(u_n-T_m(u_n))^-$ as test
		function in \eqref{Dpn} and reasoning as in the proof of
		\eqref{Dan1} we deduce that
		\begin{equation}
		\lim _{m\to \infty}\limsup_{n\to \infty}\int_{\{-(m+1)\leq u_n\leq
			-m\}} a(x,t,u_n,\nabla u_n)\nabla u_n\,dx\,dt=0 .\label{Dan2}
		\end{equation}
		Thus \eqref{D3.20} follows from \eqref{Dan1} and \eqref{Dan2}.
	\end{proof}
	\subsection*{Step 2 : Almost everywhere convergence of the gradients}
	\quad This step is devoted to introduce for $k\geq 0$ fixed a time
	regularization of the function $T_k(u)$ in order to perform the
	monotonicity method (the proof of this steps is similar the Step 4 in \cite{B.M.R}). This kind of regularization has been first introduced by R. Landes (see Lemma 6 and Proposition 3 \cite[p. 230]{La}  and Proposition 4, \cite[p. 231]{La}). For $k> 0$ fixed, and let $\varphi(t)=te^{\gamma t^2}$, $\gamma>0$. It is well known that when $\gamma>\left(\frac{L_1(k)}{2\alpha}\right)^2$, one has
	\begin{equation}\begin{array}{l}
	\varphi'(s)-(\frac{L_1(k)}{\alpha})|\varphi(s)|  \geq  \frac{1}{2},\mbox{ for all }s \in \mathbb{R}. \end{array}\label{D26}\end{equation}
	Let $\{\psi_i\} \subset \mathcal{D}(\Omega)$ be a sequence which converge strongly to
	$u_0$ in $L^1(\Omega)$.\\ Set $w_{\mu}^i=(T_k(u))_{\mu}+e^{-\mu
		t}T_k(\psi_i)$ where $(T_k(u))_{\mu}$ is the mollification with
	respect to time of $T_k(u)$. Note that $w_{\mu}^i$ is a smooth
	function having the following properties:
	\begin{gather}
	\frac{\partial w_{\mu}^i}{\partial t}=\mu(T_k(u)-w_{\mu}^i),\quad
	w_{\mu}^i(0)=T_k(\psi_i),\quad \big|w_{\mu}^i\big|\leq k,\\
	w_{\mu}^i\to T_k(u)  \mbox{ strongly in }L^p(0,T;W^{1,p}_0(\Omega)),  \mbox{ as } \mu\to \infty.\label{D3.25}
	\end{gather}
	We introduce the following function of one real variable:
	$$
	h_m(s)=\begin{cases}
	1 &  \mbox{if }  |s| \leq m,\\
	0  &  \mbox{if } |s|\geq m+1,\\
	m+1- |s|  &  \mbox{if } m\leq |s|\leq m+1,
	\end{cases}
	$$
	where $m>k$. Let $\theta_{n}^{\mu,i}=T_k(u_n)-w^{i}_{\mu}$ and $z_{n,m}^{\mu,i}=\varphi(\theta_{n}^{\mu,i})h_m(u_n)$. Using in \eqref{Dpn} the test function $z_{n,m}^{\mu,i}$, we obtain since $g_{n}(x,t,u_n,\nabla u_n)\varphi(T_k(u_n)-w^{i}_{\mu})h_m(u_n)\geq0$ on $\{|u_n|>k\}$:
	\begin{equation}\begin{array}{l}
	\displaystyle \int_0^T\langle \frac{\partial
		b_n(x,u_n)}{\partial t}\ ;\
	\varphi(T_k(u_n)-w_{\mu}^i)h_m(u_n)\rangle\,dt\\
	+ \displaystyle{\int_{Q_T}}a(x,t,u_n,\nabla u_n)(\nabla T_k(u_n)- \nabla w_{\mu}^i)\varphi'(\theta_{n}^{\mu,i})h_m(u_n)\,dx\,dt \\
	+\displaystyle{\int_{Q_T}}a(x,t,u_n,\nabla u_n)\nabla u_n\varphi(\theta_{n}^{\mu,i})h'_m(u_n)\,dx\,dt \\
	+\displaystyle{\int_{\{|u_n|\leq k\}}}g_{n}(x,t,u_n,\nabla u_n)\varphi(T_k(u_n)-w^{i}_{\mu})h_m(u_n)\,dx\,dt\\
	\quad\leq \displaystyle{\int_{Q_T}} |f_n z_{n,m}^{\mu,i}|\,dx\,dt +\displaystyle{\int_{Q_T}}|H_{n}(x,t,\nabla u_n)z_{n,m}^{\mu,i}|\,dx\,dt.\end{array}\label{D28} \end{equation}
	In the rest of this paper, we will omit for simplicity the denote $\varepsilon(n,\mu,i,m)$ all quantities (possibly different) such that 	 \begin{equation*}\displaystyle{\lim_{m\rightarrow \infty}\lim_{i\rightarrow \infty}\lim_{\mu\rightarrow \infty}\lim_{n\rightarrow \infty}\varepsilon(n,\mu,i,m)=0,}	\end{equation*}
	and this will be the order in which the parameters we use will tend to infinity, that is, first $n$, then $\mu,i$ and finally $m$. Similarly we will write only $\varepsilon(n)$, or $\varepsilon(n,\mu)$,... to mean that the limits are made only on the specified parameters.
	\par We will deal with each term of \eqref{D28}. First of all, observe that
	\begin{equation*}
	\displaystyle{\int_{Q_T}} |f_n z_{n,m}^{\mu,i}|\,dx\,dt +\displaystyle{\int_{Q_T}}|H_{n}(x,t,\nabla u_n)z_{n,m}^{\mu,i}|\,dx\,dt  =  \varepsilon(n,\mu),\label{D29} \end{equation*}
	since $\varphi(T_k(u_n)-w^{i}_{\mu})h_m(u_n)$ converges to $\varphi(T_k(u)-(T_k(u))_{\mu}+e^{-\mu t}T_k(\psi_i))h_m(u)$ strongly in $L^p(Q_T)$ and weakly$-*$ in $L^\infty(Q_T)$ as $n\rightarrow\infty$ and finally $\varphi(T_k(u)-(T_k(u))_{\mu}+e^{-\mu t}T_k(\psi_i))h_m(u)$ converges to $0$ strongly in $L^p(Q_T)$ and weakly$-*$ in $L^\infty(Q_T)$ as $\mu\rightarrow\infty$.
	Thanks to \eqref{D3.20} the third and fourth integrals on the right hand side of \eqref{D28} tend to zero as $n$ and $m$ tend to infinity,
	and by Lebesgue's theorem and  $F \in (L^{p'} (Q_T))^N$, we deduce that the right hand side of \eqref{D28} converges to zero as $n$, $m$ and
	$\mu$ tend to infinity. Since $(T_k(u_n)-w^i_\mu)h_m(u_n) \rightharpoonup (T_k(u)-w^i_\mu)h_m(u)$ weakly$^*$in $L^1(Q_T)$ and strongly in $L^p(0,T;W^{1,p}_0(\Omega))$ and $ (T_k(u)-w^i_\mu)h_m(u)\rightharpoonup 0$ weakly$^*$in $L^1(Q_T)$ and strongly in $L^p(0,T;W^{1,p}_0(\Omega))$ as $\mu\rightarrow+\infty.$
	\par On the one hand, the  definition of the sequence $w_{\mu}^i$ makes it possible to establish the following lemma.
	\begin{lemma}\label{Dznn}
		For $k\geq 0$ we have
		\begin{equation}
		\int_0^T\langle \frac{\partial
			b_n(x,u_n)}{\partial t}\ ;\ \varphi(T_k(u_n)-w_{\mu}^i)h_m(u_n)\rangle\,dt\geq \varepsilon(n,m,\mu,i).\label{Dzn}
		\end{equation}
	\end{lemma}
	\begin{proof}(see  Blanchard and Redwane \cite{B.M.R1}).
	\end{proof}
	On the other hand, the second term of the left hand side of \eqref{D28} can be written as
	$$\begin{array}{l}
	\displaystyle{\int_{Q_T}}a(x,t,u_n,\nabla u_n)(\nabla T_k(u_n)- \nabla w_{\mu}^i)\varphi'(T_k(u_n)-  w_{\mu}^i)h_m(u_n)\,dx\,dt \\
	=\displaystyle{\int_{\{|u_{n}|\leq k\}}a(x,t,u_n,\nabla u_n)(\nabla T_k(u_n)- \nabla w_{\mu}^i)\varphi'(T_k(u_n)-  w_{\mu}^i)h_m(u_n)\,dx\,dt} \\
	+\displaystyle{\int_{\{|u_{n}|> k\}}}a(x,t,u_n,\nabla u_n)(\nabla T_k(u_n)- \nabla w_{\mu}^i)\varphi'(T_k(u_n)-  w_{\mu}^i)h_m(u_n)\,dx\,dt \\
	=\displaystyle{\int_{Q_T}a(x,t,u_n,\nabla u_n)(\nabla T_k(u_n)- \nabla w_{\mu}^i)\varphi'(T_k(u_n)-  w_{\mu}^i)\,dx\,dt }\\
	+\displaystyle{\int_{\{|u_{n}|> k\}}}a(x,t,u_n,\nabla u_n)(\nabla T_k(u_n)- \nabla w_{\mu}^i)\varphi'(T_k(u_n)-  w_{\mu}^i)h_m(u_n)\,dx\,dt ,
	\end{array}$$
	since $m>k$ and $h_m(u_n)=1$ on $\{|u_n|\leq k\}$, we deduce that
	\begin{equation}\label{Dk1}
	\begin{array}{l}
	\displaystyle{\int_{Q_T}}a(x,t,u_n,\nabla u_n)(\nabla T_k(u_n)- \nabla w_{\mu}^i)\varphi'(T_k(u_n)-  w_{\mu}^i)h_m(u_n)\,dx\,dt \\
	=\displaystyle{\int_{Q_T}}\Big(a(x,t,T_k(u_n),\nabla T_k(u_n))-a(x,t,T_k(u_n),\nabla T_k(u))\Big)\\
	\hskip1cm (\nabla T_k(u_n)- \nabla T_k(u))\varphi'(T_k(u_n)-  w_{\mu}^i)\,dx\,dt \\
	+\displaystyle{\int_{Q_T}}a(x,t,T_k(u_n),\nabla T_k(u))(\nabla T_k(u_n)- \nabla T_k(u))\\
	\hskip1cm \varphi'(T_k(u_n)-  w_{\mu}^i)h_m(u_n)\,dx\,dt \\
	+\displaystyle{\int_{Q_T}}a(x,t,T_k(u_n),\nabla T_k(u_n)) \nabla T_k(u)\varphi'(T_k(u_n)-  w_{\mu}^i)h_m(u_n)\,dx\,dt \\
	-\displaystyle{\int_{Q_T}}a(x,t,u_n,\nabla u_n)\nabla w_{\mu}^i\varphi'(T_k(u_n)-  w_{\mu}^i)h_m(u_n)\,dx\,dt \\
	= K_1+K_2+K_3+K_4.
	\end{array}
	\end{equation}
	Using \eqref{D2.3}, \eqref{D3.19} and Lebesgue's theorem we have $a(x,t,T_k(u_n),\nabla T_k(u))$ converges to $a(x,t,T_k(u),\nabla T_k(u))$ strongly in $(L^{p'}(Q_T))^N$ and $\nabla T_k(u_n)$ converges to $\nabla T_k(u)$ weakly in $(L^p(Q_T))^N$, then
	\begin{equation}\label{Dk2}
	K_2 = \varepsilon(n).
	\end{equation}
	Using \eqref{D3.19} and \eqref{D3.25} we have
	\begin{equation}\label{Dk3}
	K_3=\displaystyle{\int_{Q_T}}\overline{a} \nabla T_k(u)\,dx\,dt+ \varepsilon(n,\mu),
	\end{equation}
	For what concerns $K_4$ we can write, since $h_m(u_n)=0$ on $\{|u_n|>m+1\}$
	$$\begin{array}{l}K_4=-\displaystyle{\int_{Q_T}}a(x,t,T_{m+1}(u_n),\nabla T_{m+1}(u_n))\nabla w_{\mu}^i\varphi'(T_k(u_n)-
	w_{\mu}^i)h_m(u_n)\,dx\,dt \\
	=-\displaystyle{\int_{\{|u_n|\leq k\}}}a(x,t,T_{k}(u_n),\nabla T_{k}(u_n)) \nabla w_{\mu}^i\varphi'(T_k(u_n)-
	w_{\mu}^i)h_m(u_n)\,dx\,dt \\
	-\displaystyle{\int_{\{k<|u_n|\leq m+1\}}}a(x,t,T_{m+1}(u_n),\nabla T_{m+1}(u_n)) \nabla w_{\mu}^i\\
	\hskip1cm\varphi'(T_k(u_n)-
	w_{\mu}^i)
	h_m(u_n)\,dx\,dt,\end{array}$$
	and, as above, by letting $n\rightarrow \infty$
	$$\begin{array}{l}K_4 =-\displaystyle{\int_{\{|u|\leq k\}}} \overline{a} \nabla w_{\mu}^i\varphi'(T_k(u)-
	w_{\mu}^i)\,dx\,dt \\
	\quad -\displaystyle{\int_{\{k<|u|\leq m+1\}}}\overline{a} \nabla w_{\mu}^i\varphi'(T_k(u)-
	w_{\mu}^i)h_m(u)\,dx\,dt+ \varepsilon(n),\end{array}$$
	so that, by letting $\mu\rightarrow \infty$
	\begin{equation}\label{Dk4}
	K_4 = -\displaystyle{\int_{Q_T}} \overline{a} \nabla T_k(u)\,dx\,dt+ \varepsilon(n,\mu).
	\end{equation}
	In view of (\ref{Dk1}), (\ref{Dk2}), (\ref{Dk3}) and (\ref{Dk4}), we conclude then that\begin{equation}\begin{array}{l}
	\displaystyle \int_{Q_T}a(x,t,u_n,\nabla u_n)(\nabla T_k(u_n)- \nabla w_{\mu}^i)\varphi'(T_k(u_n)-  w_{\mu}^i)h_m(u_n)\,dx\,dt \\
	=\displaystyle \int_{Q_T} \Big(a(x,t,T_k(u_n),\nabla T_k(u_n))-a(x,t,T_k(u_n),\nabla T_k(u))\Big)\\
	\quad (\nabla T_k(u_n)- \nabla T_k(u))\varphi'(T_k(u_n)-  w_{\mu}^i)\,dx\,dt + \varepsilon(n,\mu).\end{array}\label{D31} \end{equation}
	To deal with the third term of the left hand side of \eqref{D28}, observe that
	$$\begin{array}{l}
	\left|\displaystyle{\int_{Q_T}}a(x,t,u_n,\nabla u_n)\nabla u_n\varphi(\theta_{n}^{\mu,i})h'_m(u_n)\,dx\,dt\right| \\
	\quad\leq\varphi(2k)\displaystyle{\int_{\{m\leq|u_{n}|\leq m +1\}}}a(x,t,u_n,\nabla u_n)\nabla u_n\,dx\,dt.
	\end{array}$$
	Thanks to \eqref{D3.20}, we obtain
	\begin{equation}\begin{array}{l}
	\left|\displaystyle{\int_{Q_T}}a(x,t,u_n,\nabla u_n)\nabla u_n\varphi(\theta_{n}^{\mu,i})h'_m(u_n)\,dx\,dt\right| \leq  \varepsilon(n,m).
	\end{array}\label{D32} \end{equation}
	We now turn to fourth term of the left hand side of (\ref{D28}), we can write
	\begin{equation}
	\begin{array}{l}
	\displaystyle{\left|\int_{\{|u_n|\leq k\}}g_{n}(x,t,u_n,\nabla u_n)\varphi(T_k(u_n)-w^{i}_{\mu})h_m(u_n)dx dt\right|} \\
	\leq  \displaystyle{\int_{\{|u_n|\leq k\}}L_1(k)(L_2(x,t)+|\nabla T_k(u_n)|^p|\varphi(T_k(u_n)-w^{i}_{\mu})h_m(u_n)\,dx\,dt }\\
	\leq  \displaystyle{L_1(k)\int_{Q_T}L_2(x,t)|\varphi(T_k(u_n)-w^{i}_{\mu})|\,dx\,dt }\\
	\quad+  \displaystyle{\frac{L_1(k)}{\alpha}\int_{Q_T}a(x,t,T_k(u_n),\nabla T_k(u_n)) \nabla T_k(u_n)|\varphi(T_k(u_n)-w^{i}_{\mu})|\,dx\,dt },
	\end{array}
	\label{D33}
	\end{equation}
	since $L_2(x,t)$ belong to $L^1(Q_T)$ it is easy to see that
	$$
	\displaystyle{L_1(k)\int_{Q_T}L_2(x,t)|\varphi(T_k(u_n)-w^{i}_{\mu})|\,dx\,dt } =  \varepsilon(n,\mu).
	$$
	On the other hand, the second term of the right hand side of (\ref{D33}), write as
	$$\begin{array}{l}
	\displaystyle{\frac{L_1(k)}{\alpha}\int_{Q_T}a(x,t,T_k(u_n),\nabla T_k(u_n))\nabla T_k(u_n)|\varphi(T_k(u_n)-w^{i}_{\mu})|\,dx\,dt }\\
	=\displaystyle{\frac{L_1(k)}{\alpha}\int_{Q_T}}\Big(a(x,t,T_k(u_n),\nabla T_k(u_n))-a(x,t,T_k(u_n),\nabla T_k(u))\Big)\\
	\hskip1cm (\nabla T_k(u_n)-\nabla T_k(u))|\varphi(T_k(u_n)-w^{i}_{\mu})|\,dx\,dt \\
	+\displaystyle{\frac{L_1(k)}{\alpha}\int_{Q_T}}a(x,t,T_k(u_n),\nabla T_k(u))(\nabla T_k(u_n)-\nabla T_k(u))|\varphi(T_k(u_n)-w^{i}_{\mu})|\,dx\,dt \\
	+\displaystyle{\frac{L_1(k)}{\alpha}\int_{Q_T}}a(x,t,T_k(u_n),\nabla T_k(u))\nabla T_k(u)|\varphi(T_k(u_n)-w^{i}_{\mu})|\,dx\,dt,
	\end{array}
	$$
	and, as above, by letting first $n$ then finally $\mu$ go to infinity, we can easily seen, that each one of last two integrals is of the form $\varepsilon(n,\mu)$. This implies that
	\begin{equation}
	\begin{array}{l}
	\displaystyle{\left|\int_{\{|u_n|\leq k\}}g_{n}(x,t,u_n,\nabla u_n)\varphi(T_k(u_n)-w^{i}_{\mu})h_m(u_n)dx dt\right|} \\
	\leq\displaystyle{\frac{L_1(k)}{\alpha}\int_{Q_T}}\Big(a(x,t,T_k(u_n),\nabla T_k(u_n))-a(x,t,T_k(u_n),\nabla T_k(u))\Big)\\
	\hskip1cm (\nabla T_k(u_n)-\nabla T_k(u))|\varphi(T_k(u_n)-w^{i}_{\mu})|\,dx\,dt +\varepsilon(n,\mu).
	\end{array}
	\label{D34}
	\end{equation}
	Combining (\ref{D28}), (\ref{Dzn}), (\ref{D31}), (\ref{D32}) and (\ref{D34}), we get
	$$
	\begin{array}{l}
	\displaystyle{\int_{Q_T}}\Big(a(x,t,T_k(u_n),\nabla T_k(u_n))-a(x,t,T_k(u_n),\nabla T_k(u))\Big)\\
	\hskip1cm (\nabla T_k( u_n)-\nabla T_k(u))\left(\varphi'(T_k(u)-w^{i}_{\mu})-\frac{L_1(k)}{\alpha}|\varphi(T_k(u_n)-w^{i}_{\mu})|\right) \,dx\,dt \\
	\quad\quad\leq \varepsilon(n,\mu,i,m),
	\end{array}
	$$
	and so, thanks to (\ref{D26}), we have
	\begin{equation*}
	\begin{array}{l}
	\displaystyle{\int_{Q_T}}\Big(a(x,t,T_k(u_n),\nabla T_k(u_n))-a(x,t,T_k(u_n),\nabla T_k(u))\Big)\\
	\quad\quad(\nabla T_k( u_n)-\nabla T_k(u))\,dx\,dt\leq \varepsilon(n).
	\end{array}\label{D35}
	\end{equation*}
	Hence by passing to the limit sup over $n$, we get
	$$
	\begin{array}{l}
	\displaystyle \limsup_{n\rightarrow\infty}
	\displaystyle{\int_{Q_T}}\Big(a(x,t,T_k( u_n),\nabla T_k( u_n))-a(x,T_k( u_n),\nabla T_k( u))\Big)\\
	\quad\quad(\nabla T_k( u_n)-\nabla T_k( u)) \,dx \,dt=0,
	\end{array}
	$$
	This implies that
	\begin{equation}
	T_k(u_n)\to T_k(u) \mbox{ strongly in $L^p(0,T;W^{1,p}_0(\Omega))$  }  \forall k. \label{D3.39}
	\end{equation}
	Now, observe that for every $\sigma>0$,
	\begin{align*}
	&\operatorname{meas}\Big\{(x,t)\in Q_T:\ |\nabla u_n-\nabla u|>\sigma\Big\}\\
	&\leq \operatorname{meas}\Big\{(x,t)\in Q_T:\ |\nabla u_n|>k\Big\}
	+\operatorname{meas}\Big\{(x,t)\in Q_T:\ |u|>k\Big\}\\
	&\quad +\operatorname{meas}\Big\{(x,t)\in Q_T:\
	\big|\nabla T_k(u_n)-\nabla T_k(u)\big|>\sigma\Big\}
	\end{align*}
	then as a consequence of \eqref{D3.39} we have that $\nabla u_n$ converges
	to $\nabla u$ in measure and therefore, always reasoning for a
	subsequence,
	\begin{equation*}
	\nabla u_n\to \nabla u \mbox{ a. e. in }Q_T .\label{D3.40}
	\end{equation*}
	Which implies
	\begin{equation}
	a(x,t,T_k(u_n),\nabla T_k(u_n))\rightharpoonup a(x,t,T_k(u),\nabla T_k(u))
	\mbox{ weakly in } (L^{p'}(Q_T))^N. \label{D3.41}
	\end{equation}
	\subsection*{Step 3 : Equi-integrability of $H_{n}(x,t,\nabla u_n)$ and $g_{n}(x,t,u_n,\nabla u_n)$}
	\quad We shall now prove that $H_{n}(x,t,\nabla u_n)$ converges to $H(x,t,\nabla u)$ and $g_{n}(x,t,u_n,\nabla u_n)$ converges to  $g(x,t,u,\nabla u)$ strongly in $L^{1}(Q_T)$ by using Vitali's theorem. Since $H_{n}(x,t,\nabla u_n)\rightarrow H(x,t,\nabla u)$ a.e. $Q_T$ and $g_{n}(x,t,u_n,\nabla u_n)\rightarrow g(x,t,u,\nabla u)$ a.e. $Q_T$, thanks to \eqref{D2.6} and \eqref{D2.666}, it suffices to prove that $H_{n}(x,t,\nabla u_n)$ and $g_{n}(x,t,u_n,\nabla u_n)$ are uniformly equi-integrable in $Q_T$. We will now prove that $H(x,\nabla u_n)$ is uniformly equi-integrable, we use H\"{o}lder's inequality and \eqref{Dbounded}, we have for any measurable subset $E\subset Q_T$:
	\begin{equation*}\begin{array}{lll}
	\displaystyle{\int_E }|H(x,\nabla u_n)| \,dx\,dt& \leq & \left(\displaystyle{\int_E } h^p(x,t)\,dx\,dt\right)^{\frac{1}{p}}\left(\displaystyle{\int_{Q_T}}|\nabla u_n|^p \,dx\,dt\right)^{\frac{1}{p'}}\\
	\ & \leq & c_1\left(\displaystyle{\int_E } h^p(x,t)\,dx\,dt\right)^{\frac{1}{p}}
	\label{Dequi2}\end{array}\end{equation*}
	which is small uniformly in $n$ when the measure of $E$ is small.
	\par To prove the uniform equi-integrability of $g_{n}(x,t,u_n,\nabla u_n)$. For any measurable subset $E\subset Q_T$ and $m\geq0$,\begin{equation}
	\begin{array}{l}\displaystyle{\int_{E}}|g_{n}(x,t,u_n,\nabla u_n)|\,dx\,dt=  \displaystyle{\int_{E\cap \{|u_n|\leq m\}}}|g_{n}(x,t,u_n,\nabla u_n)|\,dx\,dt\\
	+\displaystyle{\int_{E\cap \{|u_n|> m\}}}|g_{n}(x,t,u_n,\nabla u_n)|\,dx\,dt\\
	\quad\leq \displaystyle{L_1(m)\int_{E\cap \{|u_n|\leq m\}} \left[L_2(x,t)+ |\nabla u_n|^p\right]\,dx\,dt}\\
	+\displaystyle{\int_{E\cap \{|u_n|> m\}}}|g_{n}(x,t,u_n,\nabla u_n)|\,dx\,dt\\
	\quad=  K_1+ K_2.\end{array}\label{Dequi1}\end{equation}
	For fixed $m$, we get
	$$\begin{array}{lll}
	K_1  &\leq & \displaystyle{L_1(m)\int_{E}\left[L_2(x,t)+ |\nabla T_m(u_n)|^{p}\right]\,dx\,dt},
	\end{array}$$
	which is thus small uniformly in $n$ for $m$ fixed when the measure of $E$ is small (recall that $T_m(u_n)$  tends to $T_m(u)$ strongly in $L^p(0,T;W_0^{1,p}(\Omega))$).
	We now discuss the behavior of the second integral of the right hand side of \eqref{Dequi1},  let $\psi_m$ be a function such that
	\begin{eqnarray*}\left\{\begin{array}{lll}
			\psi_m(s) = 0 & \mbox{if} &|s| \leq m-1,\\
			\psi_m(s)= \mbox{sign}(s) & \mbox{if} & |s| \geq m,\\
			\psi'_m(s) = 1 & \mbox{if} &  m - 1 < |s|<  m.\end{array}\right.\end{eqnarray*}
	We choose for $m > 1$, $\psi_m(u_n)$ as a test function  in \eqref{Dpn}, and we obtain
	\begin{align*}
	&\Big[\int_{\Omega} B_m^n(x,u_n) dx\Big]_0^T
	+\int_{Q_T} a(x,t,u_n,\nabla u_n) \nabla u_n\psi'_m(u_n)\,dx\,dt \\
	&+\int_{Q_T}g_{n}(x,t,u_n,\nabla u_n)\psi_m(u_n)\,dx\,dt+\int_{Q_T}H_{n}(x,t,\nabla u_n)\psi_m(u_n)\,dx\,dt\\
	&\quad =\int_{Q_T} f_n\psi_m(u_n)\,dx\,dt,
	\end{align*}where $\displaystyle B_m^n(x,r)=\int_0^r\frac{\partial b_n(x,s)}{\partial s}
	\psi_m(s)ds$, which implies, since $B_m^n(x,r)\geq 0$ and using \eqref{D2.5}, H\"{o}lder's inequality
	$$
	\int_{\{m-1\leq|u_n|\}}|g_{n}(x,t,u_n,\nabla u_n)|\,dx\,dt\leq\int_{E}|H_{n}(x,t,\nabla u_n)|\,dx\,dt + \int_{\{m-1\leq|u_n|\}}|f|\,dx\,dt,
	$$
	and by \eqref{Dbounded}, we have
	$$
	\lim_{m\to \infty}\sup_{n\in
		\mathbb{N}}\int_{\{|u_n|>m-1\}}|g_{n}(x,t,u_n,\nabla u_n)|\,dx\,dt=0.
	$$
	Thus we proved that the second term of the right hand side of \eqref{Dequi1} is also small, uniformly in $n$ and in $E$ when $m$ is sufficiently large. Which shows that $g_{n}(x,t,u_n,\nabla u_n)$ and $H_{n}(x,t,\nabla u_n)$ are uniformly equi-integrable in $Q_T$ as required, we conclude that
	\begin{equation}\begin{array}{lll}
	H_{n}(x,t,\nabla u_n)\to H(x,t,\nabla u)& \mbox{strongly  in }& L^1(Q_T),\\
	g_{n}(x,t,u_n,\nabla u_n)\to g(x,t,u,\nabla u)& \mbox{strongly  in }& L^1(Q_T).
	\label{Dconvhg}\end{array}
	\end{equation}
	\subsection*{Step 4 : }In this step we prove that $u$ satisfies \eqref{D2.8}.
	\begin{lemma} \label{Dlemma58}
		The limit $u$ of the approximate solution $u_n$ of \eqref{Dpn}
		satisfies
		$$
		\lim_{m\to +\infty} \int_{\{m\leq |u|\leq m+1\}}
		a(x,t,u,\nabla u)\nabla u\,dx\,dt=0.
		$$
	\end{lemma}
	\begin{proof}
		Note that for any fixed $m\geq0$, one has
		\begin{align*}
		&\int_{\{m\leq |u_n|\leq m+1\}}a(x,t,u_n,\nabla u_n)\nabla u_n\,dx\,dt\\
		&=\int_{Q_T} a(x,t,u_n,\nabla u_n)(\nabla T_{m+1}(u_n)-\nabla T_{m}(u_n))\,dx\,dt\\
		&=\int_{Q_T} a(x,t,T_{m+1}(u_n),\nabla T_{m+1}(u_n))\nabla T_{m+1}(u_n)\,dx\,dt\\
		&\quad -\int_{Q_T}a(x,t,T_m(u_n),\nabla T_m(u_n))\nabla T_m(u_n)\,dx\,dt.
		\end{align*}
		According to \eqref{D3.41} and \eqref{D3.39}, one can pass to
		the limit as $n\to  +\infty$ for fixed $m\geq 0$, to obtain
		\begin{equation}
		\begin{aligned}
		&\lim_{n\to +\infty} \int_{\{m\leq |u_n|\leq
			m+1\}}a(x,t,u_n,\nabla u_n)\nabla u_n\,dx\,dt\\
		&=\int_{Q_T} a(x,t,T_{m+1}(u),\nabla T_{m+1}(u))\nabla T_{m+1}(u)\,dx\,dt\\
		& - \int_{Q_T} a(x,t,T_m(u),\nabla T_m(u))\nabla T_{m}(u_n)\,dx\,dt\\
		&=\int_{\{m\leq |u_n|\leq m+1\}} a(x,t,u,\nabla u)\nabla u\,dx\,dt.
		\end{aligned} \label{D94}
		\end{equation}
		Taking the limit as $m\to +\infty$ in \eqref{D94} and using the
		estimate \eqref{D3.20} show that $u$ satisfies \eqref{D2.8} and the
		proof is complete.
	\end{proof}
	\subsection*{Step 5 : }In this step we prove that $u$ satisfies \eqref{D2.9} and \eqref{D2.10}. Let $S$ be a function in $W^{2,\infty}(\mathbb{R})$ such that $S'$ has a compact support. Let $M$ be a positive real number such that support of $S'$ is a subset of $[-M,M]$. Pointwise multiplication
	of the approximate equation \eqref{Dpn} by $S'(u_n)$ leads to
	\begin{equation}
	\begin{aligned}
	&\frac{\partial B_S^n(x,u_n)}{\partial t}
	-\operatorname{div}\Big(S'(u_n)a(x,t,u_n,\nabla u_n)\Big)+S''(u_n)a(x,t,u_n,\nabla u_n)\nabla u_n\\
	&\quad\quad\quad\quad+S'(u_n)\Big(g_{n}(x,t,u_n,\nabla u_n)+H_{n}(x,t,\nabla u_n)\Big)=fS'(u_n) \mbox{ in }\mathcal{D}'(Q_T),\\
	&\quad\quad\mbox{ where $\displaystyle B_S^n(x,z)=\int_0^z \frac{\partial
			b_n(x,r)}{\partial r}S'(r)dr$}.
	\end{aligned}\label{D3.44}
	\end{equation}
	In what follows we pass to the limit in \eqref{D3.44} as $n$ tends to $+\infty$.
	
	$\bullet$ Limit of $\displaystyle \frac{\partial B_S^n(x,u_n)}{\partial t}$. Since $S$ is bounded and continuous, $u_n\to u$ a.e. in
	$Q_T$, implies that $B_S^n(x,u_n)$ converges to $B_S(x,u)$ a.e. in $Q_T$ and $L^{\infty}(Q_T)$-$\mbox{weak}^*$. Then $\displaystyle \frac{\partial B_S^n(x,u_n)}{\partial t}$ converges to $\displaystyle\frac{\partial B_S(x,u)}{\partial t}$ in $\mathcal{D}'(Q_T)$ as $n$ tends to $+\infty$.
	
	$\bullet$The limit of $-\operatorname{div}\Big(S'(u_n)a(x,t,u_n,\nabla u_n)\Big)$. Since $\operatorname{supp }(S')\subset [-M,M]$, we have for
	$n\geq M:\ S'(u_n)a_n(x,t,u_n,\nabla u_n)=S'(u_n)a(x,t,T_M(u_n),\nabla T_M(u_n))$ a.e. in $Q_T.$
	\par The pointwise convergence of $u_n$ to $u$, \eqref{D3.41} and the bounded character of $S'$ yield, as
	$n$ tends to $+\infty:\ S'(u_n)a_n(x,t,u_n,\nabla u_n)$ converges to\\ $S'(u)a(x,t,T_M(u),\nabla T_M(u))$ in $(L^{p'}(Q_T))^N,$ and $S'(u)a(x,t,T_M(u),\nabla T_M(u))$ has been denoted by $S'(u)a(x,t,u,\nabla u)$ in equation \eqref{D2.9}.
	
	$\bullet$ The limit of $S''(u_n)a(x,t,u_n,\nabla u_n)\nabla u_n$. Consider the ''energy'' term,\\
	$
	S''(u_n)a(x,t,u_n,\nabla u_n)\nabla u_n=S''(u_n)a(x,t,T_M(u_n),\nabla T_M(u_n))\nabla T_M(u_n)$
	a.e. in $Q_T.$
	\par The pointwise convergence of $S'(u_n)$ to $S'(u)$ and \eqref{D3.41} as $n$ tends to $+\infty$ and the bounded character of $S''$ permit us
	to conclude that $S''(u_n)a_n(x,t,u_n,\nabla u_n)\nabla u_n$ converges to\\ $S''(u)a(x,t,T_M(u),\nabla T_M(u))\nabla T_M(u)$ weakly in $L^1(Q_T).$\\
	Recall that\\
	$S''(u)a(x,t,T_M(u),\nabla T_M(u))\nabla T_M(u)=S''(u)a(x,t,u,\nabla u)\nabla u  \mbox{ a.e. in }Q_T.
	$
	
	$\bullet$ The limit of $S'(u_n)\Big(g_{n}(x,t,u_n,\nabla u_n)+H_{n}(x,t,\nabla u_n)\Big)$. From\\ $\operatorname{supp}(S')\subset [-M,M]$, by
	\eqref{Dconvhg}, we have $S'(u_n)g_{n}(x,t,u_n,\nabla u_n)$ converges to\\ $S'(u)g(x,t,u,\nabla u)$ strongly in $L^1(Q_T)$ and $S'(u_n)H_{n}(x,t,\nabla u_n)$ converge to \\$S'(u)H(x,t,\nabla u)$ strongly in $L^1(Q_T)$, as $n$ tends to $+\infty$.
	
	$\bullet$ The limit of $S'(u_n)f_n$. Since $u_n\rightarrow u$ a.e. in $Q_T$, we have
	$S'(u_n)f_n$ converges to $S'(u)f$ strongly in $L^1(Q_T),$ as $n$ tends to $+\infty.$
	
	As a consequence of the above convergence result, we are in a
	position to pass to the limit as $n$ tends to $+\infty$ in equation
	\eqref{D3.44} and to conclude that $u$ satisfies \eqref{D2.9}.
	
	It remains to show that $B_S(x,u)$ satisfies the initial condition
	\eqref{D2.10}. To this end, firstly remark that, $S$ being
	bounded and in view of (\ref{Db16}), (\ref{Dtk}), we have $B_S^n(x,u_n)$ is bounded in $L^{p}(0,T;W^{1,p}_0(\Omega))$. Secondly,
	\eqref{D3.44} and the above considerations on the behavior of the
	terms of this equation show that $\frac{\partial
		B_S^n(x,u_n)}{\partial t}$ is bounded in
	$L^{1}(Q_T)+L^{p'}(0,T;W^{-1,p'}(\Omega))$. As a consequence (see \cite{Po1}), $B_S^n(x,u_n)(t=0)=B_S^n(x,u_{0n})$
	converges to $B_S(x,u)(t=0)$ strongly in $L^1(\Omega)$. On the other
	hand, the smoothness of $S$ and in view of \eqref{Du0n} imply that
	$B_S(x,u)(t=0)=B_S(x,u_0)$ in $\Omega.$
	As a conclusion, steps 1--5 complete the proof of Theorem
	\ref{Dthm1}.
\end{proof}

\bibliographystyle{unsrt}  


\begin{thebibliography}{1}

\bibitem{A.B.E.R2014}Y. Akdim, A. Benkirane, M. El Moumni and H. Redwane
\newblock  Existence of Renormalized Solutions for nonlinear parabolic equations \newblock {\em Journal of Partial Differential Equations (JPDE), Vol 27, N 1,(2014) pp. 28--49.}

\bibitem{A.T}A. Alvino and G. Trombetti
\newblock Sulle migliori costanti di maggiorazione per una classe di equazioni ellittiche degeneri
\newblock {\em ricerche Mat. 27 (1978) 413--428.}

\bibitem{B.B}E.-F. Beckenbach and R. Bellman
\newblock Inequalities
\newblock {\em  Springer-Verlag, New York, (1965).}

\bibitem{B.M}D. Blanchard and F. Murat
\newblock Renormalized solutions of nonlinear parabolic problems with $L^1$ data: existence and uniqueness
	\newblock {\em Proceedings of the Royal Society of Edinburgh, 127A (1997), 1137--1152.}

\bibitem{B.M.R}D. Blanchard, F. Murat and H. Redwane
\newblock Existence and uniqueness of renormalized solution for a fairly general class of nonlinear parabolic problems
	\newblock {\em J. Diferential Equations 177 (2001), 331--374.}

\bibitem{B.M.R1}D. Blanchard and H. Redwane
\newblock Existence of a solution for a class of parabolic equations with three unbounded nonlinearities, natural growth terms and $L^1$ data
\newblock {\em Arab J. Math. Sci. 20 (2014), no. 2, 157-176.}

\bibitem{BoDaGaOr97} L. Boccardo, A. Dall'Aglio, T. Gallou\"{e}t, L. Orsina
\newblock Nonlinear parabolic equations with measure data
\newblock {\em J. Funct. Anal. 147 (1997), no. 1, 237--258.}

\bibitem{Boc1}L. Boccardo and T. Gallou\"{e}t
\newblock On some nonlinear elliptic and parabolic equations involving measure data
\newblock {\em J. Funct. Anal., 87 (1989), 149--169.}

\bibitem{B.G.D.M}L. Boccardo, D. Giachetti, J. I. Diaz and F. Murat
\newblock  Existence and Regularity of Renormalized Solutions of some Elliptic Problems involving derivatives  of nonlinear terms
\newblock {\em Journal of differential equations 106 (1993), 215--237.}

\bibitem{D.M.O.P}G. Dal Maso, F. Murat, L. Orsina and A. Prignet
\newblock  Definition and existence of renormalized solutions of elliptic
	equations with general measure data
	\newblock {\em  C. R. Acad. Sci. Paris 325 (1997), 481--486.}

\bibitem{Dall1}A. Dall'Aglio and L. Orsina
\newblock Nonlinear parabolic equations with natural growth conditions and $L^1$ data
\newblock {\em Nonlinear Anal. 27 (1996), 59--73.}

\bibitem{D.L}R. J. Diperna and P.-L. Lions
\newblock On the Cauchy problem for Boltzman equations: global existence
	and weak stability
	\newblock {\em Ann. of Math. (2) 130(1989), 321--366.}

\bibitem{DrPoPr2003} J. Droniou, A. Porretta, A. Prignet
\newblock Parabolic capacity and soft measures for nonlinear equations
\newblock {\em Potential Anal. 19 (2003), no. 2, 99--161.}


\bibitem{KPZ1986}  M. Kardar, G. Parisi and Y. C. Zhang
\newblock Dynamic scaling of growing interfaces
\newblock {\em Phys. Rev. Lett., 56 (1986), 889-892.}

\bibitem{La}R. Landes
\newblock On the existence of weak solutions for quasilinear parabolic initial-boundary value problems
\newblock {\em  Proc. Roy. Soc. Edinburgh Sect A 89 (1981), 321--366.}
\bibitem{Li}J.-L. Lions
\newblock Quelques m\'ethodes de r\'esolution des probl\`eme aux limites
	non lineaires
	\newblock {\em Dundo, Paris (1969).}
\bibitem{Lions96} P.-L. Lions
\newblock Mathematical topics in fluid mechanics
\newblock {\em Vol. 1, volume 3 of Oxford Lecture Series in Mathematics and its Applications, The Clarendon Press Oxford University Press, NewYork, 1996. Incompressible models, Oxford Science Publications.}
\bibitem{L2011}  W. J. Liu
\newblock Extinction properties of solutions for a class of fast diffusive $p$-Laplacian equations
\newblock {\em Nonlinear Anal. 74 (2011), 4520-4532.}
\bibitem{Murat1} F. Murat
\newblock Soluciones renormalizadas de EDP elipticas no lineales
\newblock {\em Laboratoire d'Analyse Numerique, Paris VI (1993), R93023.}

\bibitem{Murat2} F. Murat
\newblock Equations elliptiques non linéaires avec second membre $L^1$ ou mesure
\newblock {\em Comptes Rendus du 26\`{e}me Congr\`{e}s National d'Analyse Num\'erique, Les
Karellis. (1994), pages A12--A24.}
\bibitem{Po1}A. Porretta
\newblock Existence results for nonlinear parabolic equations via strong convergence of truncations
\newblock {\em Annali di matematica pura ed applicata (IV), Vol. CLXXVII (1999), pp.143--172.}
\bibitem{Po2}A. Porretta
\newblock  Nonlinear equations with natural growth terms and measure data
\newblock {\em EJDE, conference 09, 2002, pp. 183--202.}
\bibitem{Po}M. M. Porzio
\newblock Existence of solutions for some noncoercive parabolic equations
\newblock {\em Discrete Contin. Dynam. Systems, 5(3) (1999). 553--568.}
\bibitem{Ra}
J.M. Rakotoson
\newblock  Uniqueness of renormalized solutions in a $T$-set for $L^1$ data problems and the link between various formulations
\newblock {\em Indiana University Math. Jour., vol. 43, (1994), 685--702.}
\bibitem{Re3}H. Redwane
\newblock Solution renormalises de probl\`{e}mes paraboliques et elleptique non lin\'{e}aires
\newblock {\em Ph.D. thesis, Rouen}
(1997).

\end{thebibliography}

\end{document}